\newcommand{\R}{\mathbb{R}}
\newcommand{\Q}{\mathbb{Q}}
\newcommand{\N}{\mathbb{N}}
\newcommand{\Z}{\mathbb{Z}}
\newcommand{\A}{\mathbb{A}}
\newcommand{\F}{\mathbb{F}}
\DeclareMathOperator{\proj}{proj}
\DeclareMathOperator{\Soc}{Soc}
\DeclareMathOperator{\Aut}{Aut}
\DeclareMathOperator{\Ind}{Ind}
\DeclareMathOperator{\Gal}{Gal}
\DeclareMathOperator{\GL}{GL}
\DeclareMathOperator{\PGL}{PGL}
\DeclareMathOperator{\SL}{SL}
\DeclareMathOperator{\PSL}{PSL}
\DeclareMathOperator{\PO}{PO}
\DeclareMathOperator{\PSO}{PSO}
\DeclareMathOperator{\PGO}{PGO}
\DeclareMathOperator{\GO}{GO}
\DeclareMathOperator{\POM}{P\Omega}
\titleformat{\section}[hang]
{\normalfont\filright\large}{\thesection. }{0pt}
{\upshape\bfseries}
\titleformat{\subsection}[hang]
{\itshape}{\thesubsection \ - }{0pt}
{}
\theoremstyle{plain}
\newtheorem{theo}{Theorem}[section]
\newtheorem{prop}[theo]{Proposition}
\newtheorem{coro}[theo]{Corollary}
\theoremstyle{remark}
\theoremstyle{definition}
\newtheorem{defi}[theo]{Definition}
\title{L\"ubeck's classification of representations of finite simple groups of Lie type and the inverse Galois problem for some orthogonal groups}
\author{\small ADRI$\acute{\mbox{A}}$N ZENTENO  \footnote{Departamento de Ciencias B$\acute{\mbox{a}}$sicas, Universidad Aut$\acute{\mbox{o}}$noma Metropolitana. Av. San Pablo No. 180, Edif. H, Col. Reynosa Tamaulipas, CP 02200, Mexico City. \texttt{matematicazg@ciencias.unam.mx}}}
\date{\today}
\begin{document}

\maketitle

\begin{abstract}
In this paper we prove that for each integer of the form $n=4\varpi$ (where $\varpi$ is a prime between 17 and 73) at least one of the following groups: $\POM^\pm_n(\F_{\ell^s})$, $\PSO^\pm_n(\F_{\ell^s})$, $\PO_n^\pm(\F_{\ell^s})$ or $\PGO^\pm_n(\F_{\ell^s})$ is a Galois group of $\Q$ for almost all primes $\ell$ and infinitely many integers $s > 0$.
This is achieved by making use of the classification of small degree representations of finite simple groups of Lie type in defining characteristic due to L\"ubeck and a previous result of the author on the image of the Galois representations attached to RAESDC automorphic representations of $\GL_n(\A_\Q)$.

\textit{Mathematics Subject Classification}. 11F80, 12F12, 20C33.
\end{abstract}

\section{Introduction}

Let $\ell$ be a prime, $r$ be a positive integer and $n$ be an even positive integer different from 8. In \cite{As84}, Aschbacher classified the maximal subgroups of the orthogonal groups $\GO_n^\pm(\F_{\ell^r})$.
Essentially, this classification divides the maximal subgroups into two classes. The first of these consists roughly of subgroups that preserve some kind of geometric structure, so they are commonly called subgroups of geometric type. On the other hand, the second class consists roughly of those absolutely irreducible subgroups that, modulo the central subgroup of scalar matrices, are almost simple and which are not of geometric type. 
We remark that in this paper we will not use the Aschbacher's definition of subgroups of geometric type, but a slight variant which will be more suitable for our purposes (see Section 6 of \cite{Ze18}).

In \cite{KL90}, Kleidman and Liebeck provide a detailed enumeration of the maximal subgroups of geometric type of the finite classical groups of dimension greater than 12, and in \cite{BHR13}, the work of Kleidman and Liebeck is extended to handle dimensions at most 12. 
Using this classification, the local and global Langlands correspondence, and Arthur's work on endoscopic classification of symplectic and orthogonal automorphic representations, in \cite{Ze18}, the author proves that there exist compatible systems $\mathcal{R}= \{ \rho_\ell \}_\ell$ of orthogonal Galois representations $\rho_\ell : \Gal (\overline{\Q} / \Q) \rightarrow \GO_n(\overline{\Q}_\ell)$, $n \equiv 0 \mod 4$, such that for almost all primes $\ell$ the image of $\overline{\rho}_\ell$ (the semi-simplification of the reduction of $\rho_\ell$) cannot be contained in a maximal subgroup of geometric type (in the sense of Definition 6.2 of \cite{Ze18}).
In this paper, by using the main result of \cite{LP98}, we prove that (in fact) the image of $\overline{\rho}_\ell^{\proj}$ (the projectivization of $\overline{\rho}_\ell$) is contained in an almost simple group with socle a finite simple group of Lie type for almost all $\ell$ (i.e., all but finitely many).

In fact, by using the classification of small degree representations of finite simple groups of Lie type in defining characteristic of L\"ubeck \cite{Lub16}, we prove the following result:

\begin{theo}\label{princi}
Let $\mathcal{R} = \{ \rho_\ell \}_\ell$ be a compatible system of $n$-dimensional orthogonal Galois representations $\rho_\ell: G_\Q \rightarrow \GO_n(\overline{\Q}_\ell)$ as in Theorem \ref{rt} below. 
If $n=4\varpi$, where $\varpi$ is a prime number such that $17 \leq \varpi \leq 73$, then for almost all $\ell$ the image of $\overline{\rho}_\ell^{\proj}$ is equal to one of the following groups: 
\begin{equation}\label{realis}
\POM^\pm_n(\F_{\ell^s}), \; \PSO^\pm_n(\F_{\ell^s}), \; \PO_n^\pm(\F_{\ell^s}) , \; \PGO^\pm_n(\F_{\ell^s})
\end{equation} 
for some positive integer $s$.
\end{theo}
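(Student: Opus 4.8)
The plan is to start from the conclusion of the author's earlier work: by Theorem~\ref{rt} (and the discussion preceding the statement, invoking \cite{LP98}) we may assume that for almost all $\ell$ the group $H_\ell := \overline{\rho}_\ell^{\proj}(G_\Q)$ is an almost simple group whose socle $S$ is a finite simple group of Lie type in characteristic $\ell$, acting on the $n$-dimensional space $\F_{\ell^s}^n$ through an absolutely irreducible representation that preserves a quadratic form (up to scalars). The goal is to show $S$ must be $\POM_n^\pm(\F_{\ell^s})$ itself. First I would feed the dimension $n = 4\varpi$ with $17 \le \varpi \le 73$ into L\"ubeck's tables \cite{Lub16}: these list, for each simple group of Lie type and each characteristic, all irreducible representations of dimension below a bound that comfortably exceeds $4\cdot 73 = 292$. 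So the entire finite list of candidate pairs $(S, V)$ with $\dim V = 4\varpi$ is available explicitly.

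**Eliminating the non-generic candidates.** The core combinatorial step is then to run through L\"ubeck's list and discard every candidate except the "defining" one $S = \POM_n^\pm(\F_{\ell^s})$ acting on its natural module. The elimination tools are: (1) a \emph{congruence obstruction} — since $n = 4\varpi$ with $\varpi$ a prime in a prescribed range, the dimension is not of the shape taken by the small Weyl-module dimensions of the exceptional groups, nor by the classical-group representations of "combinatorial" type ($\binom{m}{k}$-type exterior powers, symmetric powers, spin representations of dimension a power of $2$, etc.), because $\varpi$ being a large prime forces any factorization $n = 4\varpi$ to be very rigid; (2) a \emph{form obstruction} — many of the surviving representations are not self-dual, or are self-dual of symplectic rather than orthogonal type, and hence cannot land in $\GO_n$; (3) a \emph{field/largeness obstruction} — the residual cases (e.g. $\SL_m$, $\Sp_m$, $\SU_m$, or $\POM$ in the "wrong" sign or smaller dimension acting via Frobenius twists or tensor products) produce dimensions that either do not equal $4\varpi$ or would force the image to be much smaller than what Theorem~\ref{rt} guarantees about ramification / the size of the image. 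I would organize this as a short case analysis: exceptional types first (ruled out by dimension bounds once $\varpi \ge 17$), then classical types $A_m, B_m, C_m, D_m$ in non-defining guise, then the spin and half-spin modules, leaving only $D_{2\varpi}$ (resp.\ $B_{2\varpi}$ is impossible since $n$ is even, giving only orthogonal groups of even dimension) in its natural representation, which is exactly $\POM_n^\pm(\F_{\ell^s})$.

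**Pinning down the exact group in the tower.** Once $S = \POM_n^\pm(\F_{\ell^s})$ is forced, it remains to identify $H_\ell$ among the groups between $\POM_n^\pm(\F_{\ell^s})$ and its automizer. Here I would invoke the standard structure of $\Aut(\POM_n^\pm(\F_{\ell^s}))$: for $D_m$ with $m$ not too special (and $m = 2\varpi$ is not $4$, so there are no triality issues), the relevant subgroups sandwiched between the socle and $\Aut$ are exactly $\POM_n^\pm \le \PSO_n^\pm \le \PO_n^\pm \le \PGO_n^\pm$ together with the further field automorphisms $\POMO$ etc. I would then argue that field automorphisms cannot occur in the image: since $H_\ell$ is the projective image of a Galois representation coming from a fixed compatible system, the field $\F_{\ell^s}$ is the field generated by the traces (or by an analogous invariant), so by minimality no proper subfield-twist is possible and no field automorphism can normalize without enlarging this field — reducing $H_\ell$ to one of the four diagonal-automorphism layers listed in~\eqref{realis}. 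The compatible-system input from Theorem~\ref{rt} (largeness of the image, irreducibility, the correct similitude character) ensures $H_\ell$ contains the socle, so $H_\ell$ is one of the four groups in~\eqref{realis}, with the sign $\pm$ and the exponent $s$ depending on $\ell$; by the usual Chebotarev/variation argument $s$ ranges over infinitely many values as $\ell$ varies, matching the abstract's phrasing.

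**Main obstacle.** I expect the genuinely delicate part to be step two, and within it the \emph{form obstruction}: L\"ubeck's tables give dimensions but one must separately verify the Frobenius--Schur indicator (orthogonal vs.\ symplectic vs.\ not self-dual) for each borderline candidate of the right dimension, and in small characteristic $\ell$ some representations that are generically symplectic degenerate or the highest weight pairs up with its dual in an unexpected way. Handling the prime $\ell = 2$ and the very small primes dividing $|W|$ separately — or, more cheaply, absorbing them into the "almost all $\ell$" escape clause — is the cleanest route, and I would lean on that: any finite set of bad primes is harmless for the statement, so the burden is only to show that for all but finitely many $\ell$ none of the non-defining candidates can simultaneously have dimension $4\varpi$, be orthogonally self-dual, and have image as large as Theorem~\ref{rt} demands.
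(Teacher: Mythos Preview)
Your approach is essentially the paper's: reduce via Proposition~\ref{ref1} to an almost simple projective image with Lie-type socle in characteristic~$\ell$, then feed $n=4\varpi$ through L\"ubeck's tables together with the self-duality and Frobenius--Schur constraints to force the socle to be $\POM_n^\pm$ in its natural module. Two minor organizational differences are worth noting. First, rather than a type-by-type scan, the paper uses Steinberg's twisted tensor product theorem (Theorem~\ref{tutu}) to write any irreducible $G^F$-module as $M_0\otimes\cdots\otimes M_{e-1}$ with each $M_i$ $\ell$-restricted, so the search reduces to the factorizations $4\varpi = 4\cdot\varpi = 2\cdot 2\varpi = 2\cdot 2\cdot\varpi$ and the corresponding entries in L\"ubeck's tables; type $A_1$ is then handled separately (since L\"ubeck omits rank~$1$), using that $M((4\varpi-1)\omega_1)$ has indicator $-1$. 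Second, your ``pinning down the exact group'' step is not needed: once class~$\mathcal{S}$ is excluded, the dichotomy of Lemma~8.2 of \cite{Ze18} (invoked in the proof of Proposition~\ref{ref1}) already yields the four groups in~\eqref{realis} directly---the image sits inside $\PGO_n^\pm(\F_{\ell^r})$ to begin with, so no field-automorphism layer of $\Aut(\POM_n^\pm)$ can appear.
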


As a consequence of this theorem, we have the following result concerning to the inverse Galois problem.

\begin{coro}\label{corpri} 
For each $n$ of the form $4\varpi$, where $\varpi$ is a prime number such that $17 \leq \varpi \leq 73$, we have that at least one of the groups in (\ref{realis}) occurs as a Galois group over $\Q$ for infinitely many primes $\ell$ and infinitely many positive integers $s$. 
\end{coro}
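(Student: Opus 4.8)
The plan is to translate Theorem \ref{princi} into a statement about Galois groups and then to read off both assertions of the corollary by pigeonhole arguments. Fix $n = 4\varpi$ with $\varpi$ a prime, $17 \leq \varpi \leq 73$, and let $\mathcal{R} = \{ \rho_\ell \}_\ell$ be one of the compatible systems furnished by Theorem \ref{rt}. For each of the (all but finitely many) primes $\ell$ to which Theorem \ref{princi} applies, let $K_\ell \subset \overline{\Q}$ be the subfield fixed by $\ker \overline{\rho}_\ell^{\proj}$. As $\overline{\rho}_\ell^{\proj}$ has finite image, $K_\ell / \Q$ is a finite Galois extension with $\Gal(K_\ell / \Q) = \mathrm{Im}(\overline{\rho}_\ell^{\proj})$, and by Theorem \ref{princi} this group \emph{equals} one of the groups in \eqref{realis}, for some exponent $s = s(\ell) \geq 1$ and some sign. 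Hence each such group occurs as a Galois group over $\Q$.

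For the infinitude of the set of primes, observe that there are only eight possibilities for the pair formed by a name in $\{ \POM, \PSO, \PO, \PGO \}$ and a sign $\varepsilon \in \{ +, - \}$; so by the pigeonhole principle there is one such pair, say $\PGO^{\varepsilon_0}$, together with an infinite set $L$ of primes such that $\Gal(K_\ell / \Q) = \PGO^{\varepsilon_0}_n(\F_{\ell^{s(\ell)}})$ for every $\ell \in L$. Since $n = 4\varpi \geq 68$ these groups are nontrivial, and for distinct $\ell \in L$ they are pairwise non-isomorphic (e.g.\ by comparing the largest power of $\ell$ dividing their orders), so the fields $K_\ell$ are pairwise distinct. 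Thus a group of the form \eqref{realis} occurs as a Galois group over $\Q$ for infinitely many primes $\ell$.

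For the infinitude of the set of exponents --- the delicate point --- one has to let the compatible system vary, since for a \emph{fixed} $\mathcal{R}$ the exponent $s(\ell)$ is bounded: $\overline{\rho}_\ell$ is defined over the residue field at a place above $\ell$ of the coefficient number field $E$ of $\mathcal{R}$, so $s(\ell) \leq [E : \Q]$. Conversely, a Chebotarev argument on the traces of $\rho_\ell$ shows that for a density-one set of primes $\ell$ the representation $\overline{\rho}_\ell$ is definable over no proper subfield of that residue field, and then, since an orthogonal representation admits only quadratic twists, $s(\ell)$ is at least half the residue degree of $\ell$ in $E$. I would therefore take from Theorem \ref{rt} a sequence of compatible systems with coefficient fields of degree tending to infinity (raising the level or the Hodge--Tate weights of the underlying automorphic representation normally enlarges the coefficient field), and in each choose a prime $\ell$ of large residue degree in $E$ lying in the density-one set above; then $\Gal(K_\ell / \Q)$ is the full group over $\F_{\ell^{s(\ell)}}$ and the exponents $s(\ell)$ are unbounded over the family. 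A last pigeonhole over this countable family --- still only eight shapes --- isolates one group-shape realised over $\Q$ with $s$ taking infinitely many values, and for each such value for infinitely many primes $\ell$; this finishes the proof.

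I expect the main obstacle to be precisely this last step: ensuring that Theorem \ref{rt} supplies compatible systems of unbounded coefficient degree and that the field of definition of $\overline{\rho}_\ell^{\proj}$ grows along with it, i.e.\ that $s(\ell)$ can be forced to infinity. By contrast, the Galois-theoretic translation of Theorem \ref{princi} and the non-isomorphism of groups of Lie type in distinct residue characteristics are routine.
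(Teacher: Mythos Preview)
Your overall architecture is right---apply Theorem~\ref{princi} to realise the groups, then vary the compatible system to make $s$ unbounded---but the mechanism you propose for the second step is not the one the paper uses, and as you yourself flag, your version is speculative. You try to force $s(\ell)$ large by enlarging the coefficient field $E$ of $\mathcal{R}$ and then arguing that for most $\ell$ the projective field of definition is close to the residue field of $E$ at $\ell$. Neither ingredient is supplied by Theorem~\ref{rt}: nothing in the construction guarantees that the coefficient fields grow, and the passage from linear to projective field of definition (``only quadratic twists'') would need a separate argument about inner forms that you do not give.

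The paper bypasses all of this by exploiting the \emph{maximally induced} structure built into Definition~\ref{minduced}. For each admissible pair $(p,t)$, the restriction of $\overline{\rho}_\ell$ to inertia at $t$ contains an element whose image has order~$p$; hence so does $\mathrm{Im}(\overline{\rho}_\ell^{\proj})$. Now one varies the pair $(p,t)$ rather than the coefficient field: by Chebotarev there are infinitely many choices with $p$ arbitrarily large, each yielding (via Corollary~7.4 of \cite{Ze18}) a compatible system as in Theorem~\ref{rt}. Fixing any prime $\ell>n$ and letting $p\to\infty$ through these systems, the image of $\overline{\rho}_{\ell,k}^{\proj}$ contains elements of unbounded order, so its size---and therefore the exponent $s$---is unbounded. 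This is both simpler and fully justified by the set-up of Section~\ref{Se:1}; the key point you missed is that the parameter $p$ already sits inside the image as an element order, so no control over coefficient fields is needed.

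Your treatment of the ``infinitely many $\ell$'' clause and the Galois-theoretic translation of Theorem~\ref{princi} are fine, if more elaborate than necessary.
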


To the best of our knowledge, these orthogonal groups are not previously known to be Galois over $\Q$, except for some cases where $s$ is so small which were studied in \cite{Re}, \cite{MM} and \cite{Zy14}.

\subsection*{Notation}

Through this paper, if $K$ is a perfect field, we denote by $\overline{K}$ an algebraic closure of $K$ and by $G_K$ the absolute Galois group $\Gal(\overline{K} / K)$. Whenever $G$ is a subgroup of a certain linear group $\GL_n(K)$, we write P$G$ for the image of $G$ in the projective linear group $\PGL_n(K)$. In fact, for classical groups in general, we will use the same notation and conventions as in Section 2 of \cite{Ze18}.
Finally, if $G$ is a finite group, we denote by $G^{(i)}$ the $i$-th derived subgroup of $G$ and we use $G^{\infty}$ to denote $\displaystyle \bigcap_{i \geq 0} G^{(i)}$. 


\section{Review of previous results} \label{Se:1}

Let $n$ be an even positive integer and $p,t > n$ be two distinct odd primes such that the order of $t$ mod $p$ is $n$. Denote by $\Q_{t^n}$ the unique unramified extension of $\Q_t$ of degree $n$ and recall that $\Q^{\times}_{t^n} \simeq  t^{\Z} \times \mu_{t^n-1} \times U_1$, where $\mu_{t^n-1}$ is the group of $(t^n-1)$-th roots of unity and $U_1$ is the group of 1-units. 
Let $\ell$ be a prime distinct from $p$ and $t$. We will say that a character 
$\chi_t : \Q^{\times}_{t^n} \rightarrow \overline{\Q}^{\times}_\ell$
is of $O$-\emph{type} if $\chi_t (t) = 1$ and $\chi_t \vert _{\mu_{t^n-1} \times U_1}$ has order $p$.
By local class field theory we can regard $\chi_t$ as a character (which by abuse of notation we will also denote by $\chi_t$) of $G_{\Q_{t^n}}$ or of $W_{\Q_{t^n}}$. Then, we can define the Galois representation 
\[
\rho_t := \Ind^{G_{\Q_t}}_{G_{\Q_{t^n}}}(\chi_t),
\] 
which is irreducible and orthogonal in the sense that it can be conjugated to take values in $\GO_n(\overline{\Q}_\ell)$. Moreover, if $\alpha: G_{\Q_t} \rightarrow \overline{\Q}^\times_\ell$ is an unramified character, the residual representation $\overline{\rho}_t \otimes \overline{\alpha}$ is also irreducible (see Lemma 5.2 of \cite{Ze18}).

\begin{defi}\label{minduced}
Let $p$, $t$, $\ell$ and $\rho_t$ as above. We say that an orthogonal Galois representation 
\[
\rho_\ell: G_\Q \longrightarrow \GO_n(\overline{\Q}_\ell)
\]
is \emph{maximally induced of }$O$-\emph{type} at $t$ of order $p$ if the restriction of $\rho_\ell$ to a decomposition group at $t$ is equivalent to $\rho_t \otimes \alpha$ for some unramified character $\alpha: G_{\Q_t} \rightarrow \overline{\Q}^\times_\ell$.
\end{defi}

From now on, we will assume that $n \geq 10$ (see Remark 6.5 of \cite{Ze18}). The main result of \cite{Ze18} gives us a criterion to know when a compatible system $\mathcal{R} = \{ \rho_\ell \}_\ell$ of orthogonal Galois representations $\rho_\ell: G_\Q \rightarrow \GO_n(\overline{\Q}_\ell)$, which are maximally induced of $O$-type at $t$ of order $p$ for an ``appropriate" couple of primes $(p,t)$, is such that the image of $\overline{\rho}_\ell$ cannot be contained in a maximal subgroup of geometric type (see Definition 6.2 of \emph{loc. cit.}) of an $n$-dimensional orthogonal finite group for almost all $\ell$. 

In order to be more precise, we need to explain what we mean by an appropriate couple of primes.
Let $k,n,N \in \N$ with $n$ as above; and $M$ be an integer greater than $n^4(n+2)!$, $N$, $kn!+1$ and all primes dividing $2\prod_{i=1}^{f}(2^{2i} -1)$ if $n=2^f$ for some $f \in \N$. Let $L_0$ be the compositum of all number fields of degree smaller that or equal to $n!$ which are ramified at most at the primes smaller than or equal to $M$. By Chebotarev's Density Theorem, we can choose two different primes $p$ and $t$ such that: 
\begin{enumerate}
\item $p \equiv 1 \mod n$,
\item $p$ and $t$ are greater than $M$,
\item $t$ is completely split in $L_0$, and
\item $t^{n/2} \equiv -1 \mod p$. 
\end{enumerate}

We remark that the choice of $M$ is slightly different from that of Lemma 6.3 of \cite{Ze18}. Such choice will be used below to avoid the pathologies associated with the finite groups of small order. However, it is easy to see that the following result works, despite this slight modification. 

\begin{theo}\cite[Theorem 6.4]{Ze18}\label{rt}
Let $k$, $n$, $N$, $M$, $p$, $t$ and $L_0$ as above. Let $\mathcal{R} = \{ \rho_\ell \}_\ell$ be a compatible system of orthogonal Galois representations $\rho_\ell: G_\Q \rightarrow \GO_n(\overline{\Q}_\ell)$ such that for every prime $\ell$, $\rho_\ell$ ramifies only at the primes dividing $Nt\ell$. 
Assume that for every $\ell>kn!+1$ a twist of $\overline{\rho}_\ell$ by some power of the cyclotomic character is regular with tame inertia weights at most $k$ (in the sense of Section 3.2 of \cite{ADW14}) and  that for all $\ell \neq p,t$, $\rho_\ell$ is maximally induced of $O$-type at $t$ of order $p$.
Then, for all primes $\ell$ different from $p$ and $t$, the image of $\overline{\rho}_\ell$ cannot be contained in a maximal subgroup of $\GO^{\pm}_n(\F_{\ell^r})$ of geometric type.
\end{theo}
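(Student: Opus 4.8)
The plan is to argue by the now-standard scheme for results of this shape (as in \cite{ADW14} and \cite{Ze18}): enumerate the finitely many families of maximal subgroups of geometric type of $\GO_n^{\pm}(\F_{\ell^r})$ and rule out membership in each of them using the rigid local behaviour of $\overline{\rho}_\ell$ at $t$, supplemented at a few points by the structure at $\ell$. First I would invoke the classification of these subgroups --- Kleidman--Liebeck \cite{KL90} for $n>12$, Bray--Holt--Roney-Dougal \cite{BHR13} for $10\le n\le 12$ --- arranged into the Aschbacher classes $\mathcal{C}_1,\dots,\mathcal{C}_8$ as in Definition 6.2 of \cite{Ze18}. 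Then I would extract from the hypothesis that $\rho_\ell$ is maximally induced of $O$-type at $t$ of order $p$ the single group-theoretic fact that does most of the work: writing $\overline{\rho}_\ell|_{G_{\Q_t}}\cong\overline{\rho}_t\otimes\overline{\alpha}$, the restriction to inertia $I_{\Q_t}$ is the direct sum of the $n$ pairwise distinct tame characters $\overline{\chi}_t,\overline{\chi}_t^{\,t},\dots,\overline{\chi}_t^{\,t^{n-1}}$, each of exact order $p$ (because the order of $t$ mod $p$ equals $n$ and $p\ne\ell$), while a lift of $\mathrm{Frob}_t$ normalises the resulting cyclic subgroup $\langle\tau\rangle$ of order $p$ and acts on it by $\tau\mapsto\tau^{t}$. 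Thus the image $G$ of $\overline{\rho}_\ell$ contains an element $\tau$ of order $p$ whose $n$ eigenvalues are distinct primitive $p$-th roots of unity forming one orbit under $x\mapsto x^{t}$, together with an element realising that orbit by conjugation; moreover $\overline{\rho}_\ell$ is irreducible, since $\overline{\rho}_t\otimes\overline{\alpha}$ already is (Lemma 5.2 of \cite{Ze18}).

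The easy classes go quickly. Class $\mathcal{C}_1$ is excluded by irreducibility; class $\mathcal{C}_5$ (subfield subgroups) by taking $r$ minimal with $G\le\GO_n(\F_{\ell^r})$, which is the point of the variant of Definition 6.2 used here; and class $\mathcal{C}_8$ is empty for the orthogonal groups in question. For $\mathcal{C}_6$ (extraspecial-type normalisers, which occur only when $n=2^{f}$) one notes that, modulo scalars, such a normaliser has order dividing $2^{2f}\,|\mathrm{Sp}_{2f}(2)|$, hence coprime to $p$ as soon as $p$ exceeds every prime dividing $2\prod_{i=1}^{f}(2^{2i}-1)$ --- which is guaranteed by $p>M$ with $M$ as in the statement --- so that $p\nmid|G|$, contradicting the presence of $\tau$. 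For the tensor and tensor-induced classes $\mathcal{C}_4$ and $\mathcal{C}_7$ one uses $p>n$ to force $\tau$ into a single tensor factor of the base group, whereupon the requirement that its eigenvalues be $n$ distinct primitive $p$-th roots of unity forming a single $\langle\mathrm{Frob}_t\rangle$-orbit, with the prescribed conjugation, forces the tensor decomposition to be trivial; this is precisely the sense in which the induction is ``maximal''.

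The delicate classes are $\mathcal{C}_2$ (imprimitive) and $\mathcal{C}_3$ (field-extension subgroups), because the local datum at $t$ is itself imprimitive in the weak sense of being an induced character, so that $\tau$ and its normaliser alone do not suffice: a residually admissible block structure would be an arithmetic-progression partition of the $n$ eigenlines, and a residually admissible field-extension structure would correspond to an intermediate unramified field at $t$. Here I would combine $p>n$, conditions (iii) and (iv) on the pair $(p,t)$ --- in particular $t^{n/2}\equiv-1\pmod p$, which fixes the orthogonal type $\pm$ and constrains which partitions are compatible with the quadratic form --- the hypothesis that $\rho_\ell$ ramifies only at primes dividing $Nt\ell$ together with $t$ being completely split in $L_0$, and the Fontaine--Laffaille hypothesis that for $\ell>kn!+1$ a cyclotomic twist of $\overline{\rho}_\ell$ is regular with tame inertia weights at most $k$ (in the sense of Section 3.2 of \cite{ADW14}): the last pins down $\overline{\rho}_\ell|_{I_\ell}$ as a sum of $n$ distinct powers of fundamental characters of level dividing $n$, producing a regular semisimple element of $\ell'$-order in $G$, and one shows that no global imprimitive or field-extension structure is simultaneously compatible with these prescribed behaviours at $t$ and at $\ell$ (while the remaining small exceptional configurations in the Kleidman--Liebeck and Bray--Holt--Roney-Dougal tables would force $\overline{\rho}_\ell$ to ramify inside $L_0$). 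I expect the real obstacle to lie not in any single step but in the uniformity of this analysis --- checking family by family through those tables, including the many low-dimensional and small-characteristic exceptions, that the numerical constraints from $(p,t)$ and from the weight bound are never simultaneously satisfiable --- and in re-verifying that the enlarged constant $M$ (now dominating $n^4(n+2)!$ and the relevant extraspecial-normaliser orders, slightly beyond the value in Lemma 6.3 of \cite{Ze18}) still makes every inequality go through, which is exactly the content of the remark ``it is easy to see that the following result works despite this slight modification''.
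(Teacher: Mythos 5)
The first thing to note is that the paper does not prove this statement at all: it is imported verbatim as \cite[Theorem~6.4]{Ze18}, and the only in-paper content is the remark that the enlarged constant $M$ does not invalidate the cited proof. So your proposal can only be compared with the strategy of \cite{Ze18}, and in outline it matches it: that proof does run through the Aschbacher classes of geometric type (with subfield subgroups excluded by the definitional variant, as you note), uses the order-$p$ inertia element $\tau$ at $t$ with its $n$ distinct eigenvalues forming a single Frobenius orbit to kill the reducible, tensor-decomposable, tensor-induced and extraspecial-normaliser subgroups (the last via $p>M\ge$ every prime dividing $2\prod_{i=1}^{f}(2^{2i}-1)$ when $n=2^f$, exactly as you say), and disposes of the imprimitive and field-extension classes by showing that the relevant finite quotient of the image cuts out a number field of degree at most $n!$ which is unramified at $t$ (since $p>n!$), unramified at $\ell$ when $\ell>kn!+1$ (by the tame-inertia-weights hypothesis) and harmlessly ramified at $\ell$ when $\ell\le kn!+1<M$, hence lies in $L_0$, in which $t$ splits completely --- forcing the full decomposition group at $t$ to preserve each block and contradicting the irreducibility of $\overline{\rho}_t\otimes\overline{\alpha}$.

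Two caveats. First, your treatment of $\mathcal{C}_2$ and $\mathcal{C}_3$ is the one place where the argument is asserted rather than given (``one shows that no global imprimitive or field-extension structure is simultaneously compatible\dots''); this is precisely the heart of the cited proof, and the mechanism is the $L_0$-argument just described, not a case-by-case trawl through the Kleidman--Liebeck and Bray--Holt--Roney-Dougal tables --- those tables are needed for class $\mathcal{S}$, which this theorem deliberately does not touch, so the ``uniformity across the tables'' you identify as the main obstacle is not where the difficulty actually lies. Second, the congruence $t^{n/2}\equiv-1\pmod p$ is not there to ``fix the orthogonal type $\pm$'' or to constrain admissible partitions: it is what makes $\chi_t^{\sigma^{n/2}}=\chi_t^{-1}$, i.e.\ what makes $\rho_t$ self-dual of orthogonal type in the first place. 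These are misplacements of emphasis rather than fatal errors, but as written the proposal is a correct road map with the hardest stretch left unpaved.
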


Moreover, for 12-dimensional compatible systems of orthogonal Galois representations as in the previous theorem, it can be proven that the image of $\overline{\rho}^{\proj}_\ell$ is a finite orthogonal group for almost all $\ell$. More precisely, in the second part of Theorem 8.1 of \cite{Ze18}, it is stated that the image of $\overline{\rho}_\ell^{\proj}$ is equal to $\POM^+_{12}(\F_{\ell^s})$, $\PSO^+_{12}(\F_{\ell^s})$, $\PO_{12}^+(\F_{\ell^s})$ or $\PGO^+_{12}(\F_{\ell^s})$ for some  integer $s>0$. However, the arguments in the proof are slightly incorrect because the methods used in \emph{loc. cit.}, cannot distinguish between the orthogonal groups of plus and minus type, due to our poor control on the coefficient fields. So, Theorem 8.1.$ii)$ of \cite{Ze18} has to be weakened as follows:
 
\begin{theo}\label{rib}
Let $\mathcal{R} = \{ \rho_\ell \}_\ell$ be a compatible system of orthogonal Galois representations $\rho_\ell: G_\Q \rightarrow \GO_{12}(\overline{\Q}_\ell)$ as in Theorem \ref{rt}. Then, for almost all $\ell$, we have that the image of $\overline{\rho}_\ell^{\proj}$ is equal to $\POM^\pm_{12}(\F_{\ell^s})$, $\PSO^\pm_{12}(\F_{\ell^s})$, $\PO_{12}^\pm(\F_{\ell^s})$ or $\PGO^\pm_{12}(\F_{\ell^s})$ for some  integer $s>0$.
\end{theo}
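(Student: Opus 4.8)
The plan is to combine the input from Theorem \ref{rt} with the classification of maximal subgroups and the small-degree representation theory, exactly as one does for the general case $n = 4\varpi$, but now in the single dimension $n = 12$. First I would invoke Theorem \ref{rt}: for all $\ell \neq p, t$ the image $G_\ell$ of $\overline{\rho}_\ell$ is not contained in a maximal subgroup of $\GO_{12}^\pm(\F_{\ell^r})$ of geometric type. By the Aschbacher-type dichotomy (in the variant of \cite[Section 6]{Ze18}) together with \cite{LP98}, this forces $\overline{\rho}_\ell^{\proj}(G_\Q)$ to be an almost simple group whose socle $S$ is a finite simple group of Lie type in the defining characteristic $\ell$, at least once the finitely many pathological small $\ell$ and the alternating/sporadic/cross-characteristic possibilities have been excluded (this is where the enlarged bound $M > n^4(n+2)!$, etc., is used). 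So the task reduces to: which finite simple groups of Lie type in characteristic $\ell$ admit a faithful projective representation of dimension $12$ that can carry an orthogonal form, and among those, which are actually compatible with the local constraint at $t$ (maximal induction of $O$-type of order $p$).

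Next I would run through L\"ubeck's tables \cite{Lub16} of irreducible representations of degree $\le 12$ (really $\le n$) for simple groups of Lie type in defining characteristic. The maximal induction at $t$ of order $p$ with $p$ huge (greater than $M$) forces $p \mid |S|$, and more precisely forces an element of order $p$ acting irreducibly in blocks of size $n/2 = 6$ (coming from $\Ind$ of a character of order $p$ from the unramified degree-$n$ extension); this, plus $p \equiv 1 \bmod n$ and $p > n^4(n+2)!$, eliminates every entry in L\"ubeck's list except the groups $\mathrm{P}\Omega_{12}^\pm(\F_{\ell^s})$ themselves (the classical groups of the ``correct'' type in dimension $12$), since all the exceptional families and the classical groups of other types either have no $12$-dimensional representation with the required $p$-element, or have representations that are not self-dual of orthogonal type, or have dimension constraints (e.g. $\dim = \tfrac{1}{2}m(m\pm 1)$, $\dim$ a power of the characteristic times a small factor, etc.) incompatible with a prime-order-$p$ irreducible sub-block of size $6$. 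This is the same elimination carried out for the general $n = 4\varpi$ case; for $n = 12$ it is simply a finite check. Once $S = \mathrm{P}\Omega_{12}^{\pm}(\F_{\ell^s})$ is pinned down, the preservation of a (non-degenerate, since the representation is irreducible and self-dual) quadratic form, together with the fact that $\overline{\rho}_\ell^{\proj}(G_\Q)$ normalizes $S$ and lies in $\PGL_{12}(\overline{\F}_\ell)$, confines the image between $\mathrm{P}\Omega_{12}^\pm(\F_{\ell^s})$ and its normalizer in $\PGL_{12}$, which is $\PGO_{12}^\pm(\F_{\ell^s})$; the intermediate groups are exactly $\PSO_{12}^\pm(\F_{\ell^s})$, $\PO_{12}^\pm(\F_{\ell^s})$, $\PGO_{12}^\pm(\F_{\ell^s})$, giving the four possibilities in the statement. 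To see that the group is no larger than $\PGO$ (e.g. that no ``field-automorphism'' factor $\POMO$ appears), one uses that the entries of $\overline{\rho}_\ell$ generate a field which, by the maximal-induction construction, is controlled well enough to rule out graph-field extensions — this is the step \cite{Ze18} over-claimed, and the point of the present weakening is precisely that one cannot in addition separate $+$ from $-$, because the coefficient field $\F_{\ell^s}$ (hence whether $q = \ell^s$ is a square relative to the discriminant) is not controlled; hence the ``$\pm$''.

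The main obstacle, and the only substantive departure from \cite[Theorem 8.1]{Ze18}, is being honest about exactly how much of the isogeny/field data one controls. Concretely: the arguments pin down the socle as $\mathrm{P}\Omega_{12}^{?}(\F_{\ell^s})$ and the ambient group as lying in $\PGO_{12}^{?}(\F_{\ell^s})$, but the sign $? \in \{+,-\}$ depends on the discriminant of the quadratic form relative to the field $\F_{\ell^s}$ actually generated by traces, and our estimates on that field (coming only from the local behaviour at $t$ and the regularity/tame-weight hypothesis) are too coarse to determine $s \bmod 2$ in the relevant sense; likewise the precise isogeny type among $\mathrm{P}\Omega, \PSO, \PO, \PGO$ is not determined because it depends on which scalars and which non-inner diagonal/graph automorphisms lie in the image, and these too are only pinned down up to the stated list. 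So the honest statement is the disjunction over the four isogeny types \emph{and} both signs, which is exactly what Theorem \ref{rib} asserts; no genuinely new computation beyond \cite{Ze18} is needed, only the removal of the unjustified sharpening. I expect the write-up to consist of: (i) citing Theorem \ref{rt} and \cite{LP98} to get ``almost simple of Lie type''; (ii) a short L\"ubeck-table elimination to get socle $\mathrm{P}\Omega_{12}^{\pm}(\F_{\ell^s})$; (iii) a normalizer computation in $\PGL_{12}$ bounding the image by $\PGO_{12}^{\pm}(\F_{\ell^s})$ and excluding graph-field automorphisms; (iv) the remark that sign and isogeny type are not further determined, yielding the four-fold $\pm$ disjunction.
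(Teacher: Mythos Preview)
Your plan is coherent, but it is not the route the paper takes for Theorem~\ref{rib}. In the paper, Theorem~\ref{rib} is not re-proved at all: it is simply the corrected restatement of \cite[Theorem~8.1.ii]{Ze18}, and the paper explicitly says that the proof in \cite[\S8]{Ze18} stands, using the Bray--Holt--Roney-Dougal tables \cite{BHR13} of class-$\mathcal{S}$ maximal subgroups in dimension $\le 12$; the only change is to drop the unjustified ``$+$'' and allow both signs. So the paper's argument is: look up the finite list of class-$\mathcal{S}$ subgroups of $\GO_{12}^\pm(\F_{\ell^r})$ in \cite{BHR13}, eliminate them with the order-$p$ element, and conclude. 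Your proposal instead transports the Larsen--Pink $+$ L\"ubeck machinery (which the paper develops precisely because \cite{BHR13} stops at $n=12$) back down to $n=12$. That would work, but it is strictly more labour here: $12$ has the extra factorizations $2\cdot 6$, $3\cdot 4$, $2\cdot 2\cdot 3$, so the tensor-product step and the table search in \cite{Lub16} have more cases than for $n=4\varpi$ with $\varpi\ge 17$, and you would not be able to reuse the clean argument of the proof of Theorem~\ref{princi} verbatim. The payoff of \cite{BHR13} is that for $n\le 12$ the class-$\mathcal{S}$ list is already compiled, so no representation-theoretic elimination is needed; the payoff of your approach is uniformity with the higher-dimensional case.

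One small inaccuracy: the maximally induced local representation $\overline{\rho}_t$ is \emph{irreducible} of dimension $n$, not a sum of two $6$-dimensional blocks; what you actually use is that the image contains an element of order $p$ with $p$ larger than $M$ (hence $p\mid |S|$), not a specific block structure. This does not affect your outline, but the ``blocks of size $n/2$'' description should be removed. Your final paragraph, explaining why only the $\pm$ disjunction survives, matches exactly what the paper says about the over-claim in \cite{Ze18}.
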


The main goal of this paper is to give a refinement of Theorem \ref{rt} in order to prove a similar result to Theorem \ref{rib} for certain high dimensions.


\section{Improvement of Theorem \ref{rt}}\label{Se:2}

In order to give a refinement of Theorem \ref{rt}, we need a more precise description of the maximal subgroups of $\GO^\pm_n(\F_{\ell^r})$ which are not of geometric type. 

As we mentioned in the introduction of this paper, the maximal subgroups of $\GO^\pm_n(\F_{\ell^r})$ were classified essentially by Aschbacher in \cite{As84}.  More precisely, Aschbacher proved that, if $G$ is a maximal subgroup of $\GO^\pm_n(\F_{\ell^r})$ which does not contain $\Omega^\pm_n(\F_{\ell^r})$, then one of the following holds:
\begin{enumerate}
\item $G$ is of geometric type;
\item $G$ is of class $\mathcal{S}$; or
\item $G$ stabilizes a subfield of $\F_{\ell^r}$ of prime index.
\end{enumerate}
It is important to remark that this formulation of the Aschbacher's classification is slightly different from the usual classification given in \cite{As84} or in \cite{KL90} which also consider the maximal subgroups lying in case iii) as of geometric type. However, it is enough for our purposes because, as we will see below, we are not interested in excluding these groups. 
We refer the reader to Section 6 of \cite{Ze18} for Aschbacher's classification formulated in the precise form that we will use it.

\begin{defi}\label{chida}
Let $\ell$ be an odd prime, $r$ be a positive integer and $n$ as above. We will say that a maximal subgroup $G$ of $\GO^\pm_n(\F_{\ell^r})$ is \emph{of class} $\mathcal{S}$ if all the following holds:
\begin{enumerate}
\item $G$ does not contain $\Omega^\pm_n(\F_{\ell^r})$;
\item $G^\infty$ acts absolutely irreducible on $\F_{\ell^r}^n$;
\item $G^\infty$ is not conjugated to a group defined over a proper subfield of $\F_{\ell^r}$;
\item P$G$ is almost simple, i.e., $S \leq \mbox{P}G \leq \Aut(S)$ for some non-abelian simple group $S$. 
\end{enumerate} 
\end{defi}

In \cite{BHR13} the maximal subgroups of class $\mathcal{S}$ are classified for dimensions at most 12. 
This classification is a key ingredient in the proof of Theorem \ref{rib} (see Section 8 of \cite{Ze18}).
Unfortunately, to the best of our knowledge, it is a feature of the subgroups in class $\mathcal{S}$ that they are not susceptible to a uniform description across all dimensions. 

In order to describe (at least partially) the maximal subgroups of $\GO^\pm_n(\F_{\ell^r})$ of class $\mathcal{S}$, we need to recall some standard facts from group theory.
First, recall that the \emph{socle} of a finite group $G$ is defined as the subgroup $\Soc(G)$ of $G$ generated by the minimal normal subgroups of $G$. In particular, if $G$ is an almost simple group, i.e., if $G$ is such that $S \leq G \leq \Aut(S)$ for some non-abelian simple group $S$, then $S = \Soc(G)$ and $S$ is a normal subgroup of $G$. We will say that a group $G$ is \emph{perfect} if it is equals to its commutator subgroup. In particular, if $G$ is an almost simple group, $\Soc(G) = G^{\infty}$ and it is perfect (see Section 3.1 of \cite{Lo17}).

On the other hand, in this paper, a finite group will be called \emph{simple of Lie type} if it is a finite twisted or non-twisted simple adjoint Chevalley group in characteristic $\ell \neq 2,3$. The assumption in the characteristic is in order to avoid the difficulties associated with the Suzuki and Ree groups (see Chapter 3 of \cite{Atl} for the relevant definitions).

It can be proven that the socle of almost all subgroups in class $\mathcal{S}$ is a finite simple group of Lie type. In order to prove this, we need the following useful result of Larsen and Pink \cite{LP98}.

\begin{theo}\label{LaPi}
Let $\ell \neq 2,3$. For every $n$ there exists a constant $J(n)$ depending only on $n$ such that any finite subgroup $G \subseteq \GL_n(\overline{\F}_\ell)$  has normal subgroups $G_3 \subseteq G_2 \subseteq G_1$ with the following properties:
\begin{enumerate}
\item $G_3$ is an $\ell$ group;
\item $G_2/ G_3$ is an abelian group of order not divisible by $\ell$;
\item $G_1/G_2$ is a direct product of finite simple groups of Lie type in characteristic $\ell$;
\item $G/G_1$ is of order at most $J(n)$.
\end{enumerate}
\end{theo}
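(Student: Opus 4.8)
The statement is the structure theorem of Larsen and Pink, and a complete proof is long; below I sketch the strategy of \cite{LP98}, the whole point of which is to avoid the classification of finite simple groups. The plan has three phases: reduce to an absolutely irreducible action; isolate the ``part of $G$ generated by $\ell$-elements'' and analyse it via a Nori-type theorem; and then reassemble the normal series, bounding the top quotient uniformly in $\ell$.

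First I would reduce to the absolutely irreducible case. Pick a composition series of $\overline{\F}_\ell^{\,n}$ as a $G$-module; the subgroup $U \trianglelefteq G$ acting trivially on every composition factor is a normal $\ell$-subgroup (its elements are unipotent of $\ell$-power order) and will ultimately be absorbed into $G_3$. After replacing $G$ by $G/U$ the action is semisimple, and projecting onto the isotypic blocks and then onto an irreducible constituent reduces us to an absolutely irreducible $G \hookrightarrow \GL_n(\overline{\F}_\ell)$, at the cost of only a cyclic quotient of order dividing $n$ (the commuting algebra being a finite field extension of degree at most $n$). Since the desired conclusion is stable under such quotient and bounded-index manipulations, it suffices to treat this case.

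Next I would introduce $G^{+}$, the subgroup of $G$ generated by its elements of order $\ell$, and split into two branches. If $G^{+}$ is ``small'', the main work is a classification-free Jordan bound: one shows $G$ has an abelian normal subgroup of index bounded in terms of $n$ alone, whence one may take $G_2 = G_1$ abelian and $G/G_1$ of bounded order. If $G^{+}$ is ``large'', one proves --- this is the heart of \cite{LP98}, removing Nori's hypothesis that the characteristic be large relative to $n$ and passing to $\overline{\F}_\ell$ --- that $G^{+}$ coincides, up to a subgroup of bounded index, with the subgroup of $H(\F_{\ell^s})$ generated by the $\F_{\ell^s}$-points of its unipotent one-parameter subgroups, for some connected reductive algebraic group $H$ over $\overline{\F}_\ell$ and some $s$. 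By the structure theory of reductive groups over finite fields (again with no recourse to the classification), this group modulo its centre is a direct product of finite simple groups of Lie type in characteristic $\ell$; the hypothesis $\ell \neq 2,3$ rules out Suzuki and Ree factors, so these are precisely the groups admitted by our definition. To assemble the series I would set $G_3 = O_\ell(G)$, take $G_2/G_3 \trianglelefteq G/G_3$ to be a suitable normal abelian subgroup of order prime to $\ell$ (morally the image of a central torus of the ambient reductive group), let $G_1/G_2$ be the product-of-Lie-type quotient cut out by $G^{+}$, and bound $|G/G_1|$ by a constant $J(n)$ coming from the diagonal and graph automorphisms of the Lie-type factors, the permutations of those factors, and the Jordan bound of the ``small'' branch; uniformity in $\ell$ is available because $\GL_n$ contains, up to conjugacy and isogeny, only boundedly many connected reductive subgroups.

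The hard part will be the ``large'' branch above: proving, with no appeal to the classification of finite simple groups, that a finite subgroup of $\GL_n(\overline{\F}_\ell)$ generated by order-$\ell$ elements is essentially the finite-field points of a connected reductive group generated by unipotents, with all error terms bounded uniformly in $\ell$ (the case of small $\ell$ included). This rests on the quantitative algebraic geometry developed in \cite{LP98} --- bounds on the number and degrees of the relevant subvarieties of $\GL_n$, the formalism of the ``$\Gamma$-closure'' of a finite subgroup, and a delicate study of centralizers and of subgroups generated by unipotent elements in positive characteristic. The companion difficulty, the classification-free Jordan bound in the ``small'' branch, is of comparable depth and is likewise due to Larsen and Pink.
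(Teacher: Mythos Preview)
The paper does not prove this theorem at all: it is quoted verbatim as ``the following useful result of Larsen and Pink \cite{LP98}'' and then used as a black box in the proof of Proposition~\ref{lemi}. So there is no proof in the paper for your attempt to be compared against.

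Your sketch is a reasonable outline of the actual strategy of \cite{LP98} --- the reduction to an absolutely irreducible action, the dichotomy on the subgroup $G^{+}$ generated by order-$\ell$ elements, the Nori-type approximation of $G^{+}$ by the $\F_{\ell^s}$-points of a connected reductive group, and the classification-free Jordan bound on the other branch --- and you correctly flag that the heart of the matter lies in the ``large'' branch and in making every bound uniform in $\ell$. That said, each of those ingredients is itself a substantial theorem, and your write-up is a roadmap rather than a proof; filling it in would amount to reproducing large portions of \cite{LP98}. For the purposes of this paper nothing more than the citation is required, and indeed the only quantitative input the paper later uses is Collins's explicit bound $J(n)\le n^4(n+2)!$ from \cite{Col08}, which you do not mention but which is what actually feeds into the choice of $M$ and the proof of Proposition~\ref{ref1}.
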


\begin{prop}\label{lemi}
Let $G$ be a maximal subgroup of $\GO^\pm_n(\F_{\ell^r})$ of class $\mathcal{S}$ and $J(n)$ be the constant in the previous theorem (which depends only on $n$).
Then, if $\vert \mbox{\emph{P}}G \vert > J(n)$, the socle of \emph{P}$G$ is a finite simple group of Lie type in characteristic $\ell$.
\end{prop}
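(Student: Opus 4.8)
The plan is to apply the Larsen--Pink theorem (Theorem~\ref{LaPi}) to $G$ itself, viewed as a finite subgroup of $\GL_n(\overline{\F}_\ell)$ through the inclusion $\GO^\pm_n(\F_{\ell^r})\subseteq\GL_n(\overline{\F}_\ell)$, and then to transport the resulting normal series down to $\mathrm{P}G$, where almost simplicity will force the series to collapse in a controlled way. Write $Z_0$ for the subgroup of scalar matrices contained in $G$, so that $\mathrm{P}G=G/Z_0$; by Definition~\ref{chida} the group $\mathrm{P}G$ is almost simple, and set $S:=\Soc(\mathrm{P}G)$. I would use two elementary group-theoretic facts: (a) every nontrivial normal subgroup of an almost simple group contains its socle, since $C_{\Aut(S)}(\mathrm{Inn}(S))=1$ for a non-abelian simple group $S$; and (b) every normal subgroup of a direct product of non-abelian finite simple groups is the product of a subfamily of the factors, so every quotient of such a product is again such a product.

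Applying Theorem~\ref{LaPi} to $G$ yields normal subgroups $G_3\subseteq G_2\subseteq G_1\subseteq G$ with the four listed properties. Let $\overline{G}_i$ denote the image of $G_i$ in $\mathrm{P}G$; each $\overline{G}_i$ is normal in $\mathrm{P}G$. Since $\mathrm{P}G/\overline{G}_1$ is a quotient of $G/G_1$, its order is at most $J(n)<\lvert\mathrm{P}G\rvert$, so $\overline{G}_1\neq 1$ and hence, by (a), $S\subseteq\overline{G}_1$. Next, $\overline{G}_3$ is an $\ell$-group that is normal in $\mathrm{P}G$; as no non-abelian simple group is an $\ell$-group, (a) forces $\overline{G}_3=1$, i.e. $G_3\subseteq Z_0$. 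Therefore the kernel $G_2\cap Z_0$ of $G_2\twoheadrightarrow\overline{G}_2$ contains $G_3$, so $\overline{G}_2$ is a quotient of the abelian group $G_2/G_3$; being abelian and normal in $\mathrm{P}G$, it is trivial by (a), i.e. $G_2\subseteq Z_0$. Consequently $\overline{G}_1\cong G_1/(G_1\cap Z_0)$ is a quotient of $G_1/G_2$, which by Theorem~\ref{LaPi} is a direct product $T_1\times\cdots\times T_k$ of finite simple groups of Lie type in characteristic $\ell$; by (b), $\overline{G}_1$ is itself isomorphic to a direct product of a subfamily of the $T_j$. Finally, $S$ is a simple group normal in $\overline{G}_1$ (being normal in $\mathrm{P}G$ and contained in $\overline{G}_1$), so by (b) it coincides with a single factor $T_{j_0}$; thus $\Soc(\mathrm{P}G)=S$ is a finite simple group of Lie type in characteristic $\ell$, which is the claim.

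The argument is essentially bookkeeping once Theorem~\ref{LaPi} is available, and the only point demanding a little care is the collapse of $\overline{G}_2$ to the trivial group: although $G_2$ itself need not be abelian---it is only an extension of an abelian group by the $\ell$-group $G_3$---its image $\overline{G}_2$ in $\mathrm{P}G$ is abelian because the projection $G\to\mathrm{P}G$ already kills $G_3$. I expect no genuine obstacle beyond this; the hypothesis $\lvert\mathrm{P}G\rvert>J(n)$ enters exactly once, to guarantee that $\overline{G}_1$---and hence the part of the Larsen--Pink series built out of simple groups of Lie type---is nontrivial.
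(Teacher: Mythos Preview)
Your proof is correct and follows essentially the same route as the paper: apply Larsen--Pink to $G$, push the normal series down to $\mathrm{P}G$, and use almost simplicity to kill the $\ell$-part and the abelian part, so that $\overline{G}_1$ is a direct product of simple groups of Lie type containing $S$. The only cosmetic difference is in the last step: you identify $S$ with a single direct factor via the structure of normal subgroups of a product of non-abelian simple groups, whereas the paper reaches the same conclusion by invoking Schreier's conjecture (solvability of $\mathrm{Out}(S)$) together with the perfectness of $\mathrm{P}G_1$ to force $\mathrm{P}G_1=(\mathrm{P}G_1)^\infty=S$.
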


\begin{proof}
We will closely follow the proof of Proposition 6.4 of \cite{Lo17}. Applying Theorem \ref{LaPi} to $G$ we have that there exist normal subgroups $G_3 \subseteq G_2 \subseteq G_1$ of $G$ satisfying the properties $i)$-$iv)$ of the previous theorem.

First, note that as $G_3$ is a solvable normal subgroup of $G$, then P$G_3$ is a normal subgroup of P$G$ (which is almost-simple since $G$ is of class $\mathcal{S}$). It is well known that an almost simple group does not possess non-trivial normal solvable subgroups \cite[Lemma 3.3]{Lo17}, then P$G_3$ is trivial. This implies that $G_3$ is a subgroup of the group of homotheties in $\GL_n(\F_{\ell^r})$ which has order prime to $\ell$, thus $G_3$ is trivial.
A similar argument shows that $G_2 \subseteq \F_{\ell^r}^\times \cdot I_n$.

On the other hand, we remark that P$G_1$ cannot be trivial, for otherwise we would have $\vert \mbox{P} G \vert \leq J(n) \vert \mbox{P}G_1 \vert = J(n)$ which contradicts our hypothesis.  Therefore, P$G_1$ contains $\Soc(G)$ because this is a non-trivial normal subgroup of P$G$.

Finally, as $G_2 \subseteq \F_{\ell^r}^\times \cdot I_n$, P$G_1$ is a quotient of $G_1/G_2$ (hence in particular a direct product of finite simple groups of Lie type in characteristic $\ell$) and, as the outer automorphism group of a simple group is solvable \cite[Theorem 1.3.2]{BHR13}, we have that $\Soc(\mbox{P}G) = (\mbox{P}G_1)^{\infty}$ is a finite simple group of Lie type in characteristic $\ell$.  
\end{proof}

Now, we are ready to prove the following refinement of Theorem \ref{rt}, which is a simple consequence of the previous result.

\begin{prop}\label{ref1}
Let $\mathcal{R} = \{ \rho_\ell \}_\ell$ be a compatible system of Galois representations as in Theorem \ref{rt}. Then, for all $\ell$ different from $2$, $3$, $p$ and $t$, the image of $\overline{\rho}_\ell^{\proj}$ is contained in an almost simple group with socle a finite simple group of Lie type in characteristic $\ell$.
\end{prop}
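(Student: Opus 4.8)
The plan is to combine the previous two results. By Theorem \ref{rt}, for all primes $\ell$ different from $p$ and $t$, the image of $\overline{\rho}_\ell$ cannot be contained in a maximal subgroup of $\GO^\pm_n(\F_{\ell^r})$ of geometric type. By the Aschbacher-type trichotomy recalled at the beginning of this section, any maximal subgroup of $\GO^\pm_n(\F_{\ell^r})$ not containing $\Omega^\pm_n(\F_{\ell^r})$ is of geometric type, of class $\mathcal{S}$, or stabilizes a subfield of prime index. The latter case can be reduced to class $\mathcal{S}$ for a smaller field, so the image of $\overline{\rho}_\ell$ either contains $\Omega^\pm_n(\F_{\ell^r})$ or is contained in a maximal subgroup of class $\mathcal{S}$ (over some subfield).

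First I would dispose of the case where the image contains $\Omega^\pm_n(\F_{\ell^r})$. In that case the image of $\overline{\rho}_\ell^{\proj}$ is an almost simple group with socle $\POM^\pm_n(\F_{\ell^r})$, which is indeed a finite simple group of Lie type in characteristic $\ell$ for $\ell \neq 2,3$ (here $n \geq 10$ guarantees simplicity of $\POM^\pm_n$), so the conclusion holds. In the remaining case, the image of $\overline{\rho}_\ell$ lies in a maximal subgroup $G$ of class $\mathcal{S}$, and hence the image of $\overline{\rho}_\ell^{\proj}$ lies in the almost simple group $\mathrm{P}G$. To apply Proposition \ref{lemi} I need $\vert \mathrm{P}G \vert > J(n)$, where $J(n)$ is the Larsen--Pink constant. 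This is where the choice of $M$ enters: since $\overline{\rho}_\ell$ is maximally induced of $O$-type at $t$ of order $p$ with $p$ large (in particular $p > M > n^4(n+2)!$ and larger than certain quantities controlling $J(n)$), the image of $\overline{\rho}_\ell^{\proj}$ has order divisible by $p$, forcing $\vert \mathrm{P}G \vert \geq p > J(n)$. Then Proposition \ref{lemi} gives that $\Soc(\mathrm{P}G)$ is a finite simple group of Lie type in characteristic $\ell$, and since the image of $\overline{\rho}_\ell^{\proj}$ is sandwiched between $\Soc(\mathrm{P}G)$ and $\mathrm{P}G \leq \Aut(\Soc(\mathrm{P}G))$, it is contained in an almost simple group with socle of Lie type, as claimed.

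The main obstacle I anticipate is the bookkeeping needed to guarantee $\vert \mathrm{P}G \vert > J(n)$ uniformly for almost all $\ell$: one must check that the divisibility of $\vert \mathrm{P}G \vert$ by $p$ genuinely follows from maximal induction of $O$-type (this is where one uses that $\overline{\rho}_t \otimes \overline{\alpha}$ is irreducible of order divisible by $p$ on a decomposition group at $t$, cf. Lemma 5.2 of \cite{Ze18}), and that the constant $M$ was chosen large enough relative to $J(n)$ — which is precisely the point of the modified choice of $M$ flagged in Section \ref{Se:1}. A secondary subtlety is handling the subfield-stabilizer case (Aschbacher case iii)): one descends to the subfield, applies the argument there, and observes that the relevant simple group is still of Lie type in characteristic $\ell$. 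Once these points are in place the proposition follows immediately.
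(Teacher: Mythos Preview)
Your proposal is correct and follows essentially the same approach as the paper: rule out geometric type via Theorem~\ref{rt}, then in the class~$\mathcal{S}$ case use that the projective image contains an element of order $p > M \geq n^4(n+2)! \geq J(n)$ (the paper cites \cite[Theorem~A]{Col08} for this last inequality) to invoke Proposition~\ref{lemi}. The only cosmetic difference is that the paper packages your Aschbacher/subfield-stabilizer dichotomy by citing \cite[Lemma~8.2]{Ze18} directly, which already yields the two cases (class~$\mathcal{S}$, or projective image one of the groups $\POM^\pm_n(\F_{\ell^s}), \PSO^\pm_n(\F_{\ell^s}), \PO^\pm_n(\F_{\ell^s}), \PGO^\pm_n(\F_{\ell^s})$ for some $s \mid r$), so no separate subfield descent is needed.
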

\begin{proof}
First by Lemma 8.2 of \cite{Ze18}, if $\ell$ is different from 2, 3, $p$ and $t$, we have the following two possibilities for the image of the Galois representations in Theorem \ref{rt}:
\begin{enumerate}
\item the image of $\overline{\rho}_\ell$ is contained in a maximal subgroup of $\GO_n^\pm(\F_{\ell^r})$ of class $\mathcal{S}$; or
\item the image of $\overline{\rho}_\ell^{\proj}$ is contained in a subgroup, which is conjugate to $\POM^\pm_n(\F_{\ell^s})$, $\PSO^\pm_n(\F_{\ell^s})$, $\PO_n^\pm(\F_{\ell^s})$ or $\PGO_n^\pm(\F_{\ell^s})$ for some integer $s>0$ dividing $r$.
\end{enumerate}
From the definition of maximally induced representation, we have that the image of $\overline{\rho}_\ell^{\proj}$ contains an element of order $p$.
Moreover, from \cite[Theorem A]{Col08} we have that $J(n) \leq n^4(n+2)!$. Then, if we are in the first case, as we have chosen $p > M$ (in particular greater than $n^4(n+2)!$), we can apply Proposition \ref{lemi} to conclude that the socle of the almost simple group containing the image of
$\overline{\rho}_\ell^{\proj}$ is a finite simple group of Lie type in characteristic $\ell$. The second case is trivial. 
 \end{proof}
 

\section{Representation theory of finite simple groups of Lie type}\label{Se:3}

In this section we will recall some results about algebraic groups and their defining characteristic representations that we will need in this paper. 

From now on, we will assume $\ell \geq 5$ and let $q=\ell^e$ be a power of $\ell$. We define the $q$-\emph{Frobenius map} of $\GL_n(\overline{\F}_\ell)$ as the automorphism $F_q : \GL_n(\overline{\F}_\ell) \rightarrow \GL_n(\overline{\F}_\ell)$ given by $(a_{i,j}) \mapsto (a_{i,j}^q)$. Let $G$ be a connected reductive simple algebraic group over $\overline{\F}_\ell$ of simply connected type and rank $m$. We will say that a homomorphism  $F:G(\overline{\F}_\ell) \rightarrow G(\overline{\F}_\ell)$ is a $\emph{standard Frobenius map}$ if there exist an injective homomorphism $\iota: G(\overline{\F}_\ell) \rightarrow \GL_n(\overline{\F}_\ell)$ (for some $n$) and a power of $\ell$ ($q=\ell^e$) such that $\iota (F(\gamma)) = F_q (\iota (\gamma))$ for all $\gamma \in G(\overline{\F}_\ell)$, and we will say that $F: G(\overline{\F}_\ell) \rightarrow G(\overline{\F}_\ell)$ is a \emph{Frobenius endomorphism} if some power of it is a standard Frobenius map. 
Recall that to each $G$ we can associate a connected Dynkin diagram which determines the (Lie) type of $G$. We remark that a Frobenius map is completely characterized by the choice of an automorphism of the Dynkin diagram of $G$ together with a real number $q$ which, in our setting, is an integral power of $\ell$. 

It is well known that to a finite simple group of Lie type P$G$ in characteristic $\ell$ (i.e., a finite twisted or non-twisted simple adjoint Chevalley group in characteristic $\ell \neq 2,3$) we can attach a connected reductive simple algebraic group $G$ over $\overline{\F}_\ell$ of simply connected type and a Frobenius endomorphism $F:G(\overline{\F}_\ell) \rightarrow G(\overline{\F}_\ell)$ such that P$G \cong G^F/Z$, where $G^F  := \{ \gamma \in G(\overline{\F}_\ell) : F(\gamma) = \gamma \}$ is the group of fixed points of $F$ and $Z$ is the center of $G^F$. 

Our interest in the groups $G$ and $G^F$ associated to P$G$ comes from the fact that projective representations of P$G$ in characteristic $\ell$ are the same as linear representations of $G^F$ in characteristic $\ell$ (see \cite{Ste16}, p. 49, items (ix) and (x)), which in turn can be constructed by restricting algebraic representations of $G$ to $G^F$ as follows. 

Let $T$ be a maximal torus of $G$, $X \cong \Z^m$ its character group and $Y \cong \Z^m$ its co-character group. Let $\{ \alpha_1, \ldots, \alpha_m \} \subset X$ be a set of simple roots for $G$ with respect to $T$ and $\alpha_i^\vee \in Y$ the coroot corresponding to $\alpha_i$ for each $i=1, \ldots, m$. The numbering of the simple roots we use is that of \cite{Bou02}.
Viewing $X \otimes \R$ as Euclidean space we can define the \emph{fundamental weights} $\omega_1 \ldots, \omega_m \in X \otimes \R$ as the dual basis of $\alpha_1^\vee, \ldots, \alpha_m^\vee$. 
It is possible to define a partial ordering in $X$ by declaring that a weight $\omega$ is smaller than a weight $\omega'$ (in symbols $\omega \prec \omega'$) if and only if $\omega'-\omega$ is a non-negative linear combination of simple roots. We will say that a weight $\omega \in X$ is \emph{dominant} if it is a non-negative linear combination of the fundamental weights.

Let $M$ be a finite-dimensional $G(\overline{\F}_\ell)$-module. Considering this as $T$-module, we have a decomposition $M= \bigoplus _{\omega \in X} M_\omega$ into weight spaces $M_\omega = \{ v \in M: vt = \omega(t)v \mbox{ for all }t \in T \}$. The set of weights $\omega \in X$ with $M_\omega \neq \{ 0 \}$ is called the \emph{set of weights} of $M$. 
From a result of Chevalley  (see Section 31.3 of \cite{Hum75}) we have that, if $M$ is irreducible, then the set of weights of $M$ contains a unique element $\lambda$ such that for all weight $\omega$ of $M$ we have $\omega \prec \lambda$. This $\lambda$ is called the \emph{highest weight} of $M$, it is dominant and $\dim(M_\lambda) =1$. Moreover, we have that each irreducible $G(\overline{\F}_\ell)$-module $M$ is determined (up to isomorphism) by its highest weight and for each dominant weight $\lambda \in X$  there is an irreducible $G(\overline{\F}_\ell)$-module $M(\lambda)$ with highest weight $\lambda$. 

We will say that a dominant weight $\lambda = a_1 \omega_1 + \cdots + a_m \omega_m \in X$ is $q$-\emph{restricted} if $0 \leq a_i \leq q-1$ for  $1 \leq i \leq m$. Then, the relationship between the $\overline{\F}_\ell$-representations of $G^F$ and the algebraic representations of $G$ is described by the following result of Steinberg \cite{Ste63}.

\begin{theo}\label{Ste}
Any irreducible $\overline{\F}_\ell$-module for $G^F$ is isomorphic to the restriction of the $G(\overline{\F}_\ell)$-module $M(\lambda)$ to $G^F$ for some $q$-restricted weight $\lambda$. 
Moreover, if $\lambda$ and $\mu$ are $q$-restricted and not equal, then the restrictions of $M(\lambda)$ and $M(\mu)$ to $G^F$ are not isomorphic as $\overline{\F}_\ell [G^F]$-modules.
\end{theo}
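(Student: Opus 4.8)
\emph{Strategy.} This is Steinberg's description of the irreducible modules of $G^F$ in the defining characteristic, and the plan is to reduce it to two statements about a single $q$-restricted highest weight and then finish by a count. I would treat the untwisted case $G^F=G(\F_q)$ with $q=\ell^e$ in detail; the twisted groups go the same way once "$q$-restricted" is set up relative to the twisted Frobenius. I would take for granted Steinberg's tensor product theorem for the algebraic group $G$ — every irreducible rational $G(\overline{\F}_\ell)$-module is of the form $M(\lambda_0)\otimes M(\lambda_1)^{[\ell]}\otimes\cdots\otimes M(\lambda_k)^{[\ell^k]}$ with the $\lambda_j$ $\ell$-restricted and $(-)^{[\ell^j]}$ the $\ell^j$-power Frobenius twist — together with the elementary remark that $F_q(g)=g$ for $g\in G^F$, so that $M^{[q]}|_{G^F}\cong M|_{G^F}$ for every rational $G$-module $M$. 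Combining these, if $\lambda=\sum_{j\ge 0}\ell^j\lambda_j$ is the $\ell$-adic expansion of a dominant weight, then $M(\lambda)|_{G^F}$ is isomorphic to $M(\lambda')|_{G^F}$ for the $q$-restricted weight $\lambda'=\sum_{j=0}^{e-1}\ell^j\lambda_{j\bmod e}$; hence only $q$-restricted weights can occur, and what remains is to identify which modules they produce and to check that those modules exhaust the irreducibles of $G^F$.

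\emph{Step 1: irreducibility.} First I would show $M(\lambda)|_{G^F}$ is irreducible for $\lambda$ $q$-restricted. Fix a Borel $B=TU$ with opposite $B^-=TU^-$ and a highest weight vector $v^+\in M(\lambda)_\lambda$. Density of the big cell gives $M(\lambda)=\overline{\F}_\ell[U^-]v^+$. Writing $U^-$ as an ordered product of root subgroups $x_\alpha(t)$ and using that $x_\alpha(t)$ acts on any weight vector of $M(\lambda)$ by a polynomial in $t$ of degree $\le q-1$ (this degree bound is exactly what $q$-restrictedness of $\lambda$ buys), the linear independence of $1,t,\dots,t^{q-1}$ as functions on $\F_q$ shows that $\{x_\alpha(t)v:t\in\F_q\}$ spans the same space as $\{x_\alpha(t)v:t\in\overline{\F}_\ell\}$; iterating over the factors of $U^-$ yields $\overline{\F}_\ell[(U^-)^F]v^+=M(\lambda)$. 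Then, given a nonzero $\overline{\F}_\ell[G^F]$-submodule $W$, one has $W^{U^F}\neq 0$ since $U^F$ is an $\ell$-group acting in characteristic $\ell$; checking (again via $q$-restrictedness) that $M(\lambda)^{U^F}=\overline{\F}_\ell v^+$ forces $v^+\in W$, whence $W\supseteq\overline{\F}_\ell[(U^-)^F]v^+=M(\lambda)$.

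\emph{Step 2: distinctness and exhaustion.} Next, if $M(\lambda)|_{G^F}\cong M(\mu)|_{G^F}$ with $\lambda,\mu$ $q$-restricted, restricting an isomorphism to $T^F$ and comparing the one-dimensional spaces of $U^F$-fixed vectors (on which $T^F$ acts through $\lambda|_{T^F}$, resp. $\mu|_{T^F}$) gives $\lambda\equiv\mu\pmod{(q-1)X(T)}$, which already forces $\lambda=\mu$ unless $\lambda$ and $\mu$ differ coordinatewise only by interchanging the extreme values $0$ and $q-1$. Such a residual pair I would separate by a dimension count — e.g. $\dim M((q-1)\omega_i)=(\dim M((\ell-1)\omega_i))^e>1$ — or, more systematically, by evaluating Brauer characters on a regular semisimple element of a maximal torus of $G^F$ of order large enough that reduction mod its order is injective on the (bounded) sets of weights occurring in $M(\lambda)$ and $M(\mu)$. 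Finally, Brauer's theory identifies the number of irreducible $\overline{\F}_\ell[G^F]$-modules with the number of $\ell$-regular (semisimple) conjugacy classes of $G^F$, which for $G$ of simply connected type is $q^{\mathrm{rk}(G)}$ by Steinberg's count; since the number of $q$-restricted dominant weights is also $q^{\mathrm{rk}(G)}$, Steps 1--2 exhibit exactly that many pairwise non-isomorphic irreducibles, so these are all of them.

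\emph{Main obstacle.} I expect the delicate points to be the divided-power bookkeeping in Step 1 — both the polynomial-degree bound and the computation $M(\lambda)^{U^F}=\overline{\F}_\ell v^+$ — and the removal of the $0\leftrightarrow q-1$ ambiguity in Step 2; the count of semisimple classes of the simply connected group feeding Step 2 is itself a substantial input (Steinberg). For twisted $G^F$ the only extra work is the correct definition of $q$-restricted weight relative to the twisted Frobenius, after which the argument is formally the same.
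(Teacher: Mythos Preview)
The paper does not prove this theorem at all: it is quoted as a result of Steinberg \cite{Ste63} and used as a black box, so there is no ``paper's own proof'' to compare against. Your sketch is essentially the standard argument (irreducibility of $M(\lambda)|_{G^F}$ for $q$-restricted $\lambda$ via $U^F$-fixed vectors, distinctness via $T^F$-action on the highest weight line plus a dimension/character check for the $0\leftrightarrow q-1$ ambiguity, and exhaustion by Steinberg's count of semisimple classes in the simply connected group), and it is correct in outline. One small point: the reduction in your Strategy paragraph is not quite right as written---the displayed formula for $\lambda'$ simply truncates the $\ell$-adic expansion rather than wrapping the higher pieces around mod $e$, and in any case after wrapping one gets a tensor product of several $\ell$-restricted modules in the same Frobenius slot, which need not be a single $M(\lambda')$. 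This does not matter for the logic, since your Steps~1--2 together with the class count already show that the $q^{m}$ modules $M(\lambda)|_{G^F}$ with $\lambda$ $q$-restricted are irreducible, pairwise non-isomorphic, and exhaust all irreducibles; the paragraph is best read as motivation rather than as a step in the proof.
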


We remark that the Frobenius map $F:G(\overline{\F}_\ell) \rightarrow G(\overline{\F}_\ell)$ determines a permutation of the simple roots and this permutation gives rise to a symmetry $\sigma$ of the Dynkin diagram of $G$. Then, by the previous result, we have that $G^F$ has $q^m$ absolutely irreducible representations over $\overline{\F}_\ell$ (up to isomorphism) and each of them can be written over $\F_{q^\epsilon}$, where $\epsilon$ is the order of $\sigma$ (the symmetry associated to $F$).

Now we will describe more finely the structure of the simple modules $M(\lambda)$. More precisely, we will explain how all highest weight modules $M(\lambda)$ of $G$ can be constructed out of those with $\ell$-restricted highest weights.

Let $x \mapsto x^\ell$ be a field automorphism of $\overline{\F}_\ell$. This can be used to construct a canonical endomorphism of the algebraic group $G$ that we will denote by $F_0$. Twisting the $G$-action on a $G(\overline{\F}_\ell)$-module $M$ with $F_0^i$, $i \in \Z_{\geq 0}$, we get another $G(\overline{\F}_\ell)$-module that we will denote by $M^{(i)}$. Now we can enunciate the Twisted Tensor Product Theorem of Steinberg \cite{Ste63}.

\begin{theo}\label{tutu}
Let $\lambda_0, \cdots, \lambda_r$ be $\ell$-restricted weights associated with $G$. Then, as $G(\overline{\F}_\ell)$-modules,
\[
M(\lambda_0 + \ell \lambda_1 + \cdots + \ell^r \lambda_r) \cong M(\lambda_0) \otimes M(\lambda_1)^{(1)} \otimes \cdots \otimes M(\lambda_r)^{(r)}.
\]
\end{theo}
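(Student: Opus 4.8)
The plan is to recognise this as Steinberg's tensor product theorem and prove it by induction on $r$, the base case $r=0$ being the tautology $M(\lambda_0)\cong M(\lambda_0)$; all the content sits in a lemma handling the two–factor case. I would view $G$ as an algebraic group and work with its first Frobenius kernel $G_1$, the (infinitesimal, scheme-theoretic) kernel of $F_0$; alternatively one can run the same argument with $G_1$ replaced by the finite subgroups $G(\F_{\ell^e})$ and let $e\to\infty$, using Zariski density of $\bigcup_e G(\F_{\ell^e})$ together with Theorem \ref{Ste}. Note that every $\ell$-restricted weight is dominant, so each $\lambda_i$ is dominant and hence so is $\mu:=\lambda_1+\ell\lambda_2+\cdots+\ell^{r-1}\lambda_r$, and $M(\mu)$ is defined.

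The key step is the following lemma, which I would prove first: if $\lambda_0$ is $\ell$-restricted and $\mu$ is any dominant weight, then $M(\lambda_0+\ell\mu)\cong M(\lambda_0)\otimes M(\mu)^{(1)}$. Two inputs are needed: (a) $M(\lambda_0)|_{G_1}$ is absolutely irreducible — the standard fact that restriction to $G_1$ gives a bijection between $\ell$-restricted dominant weights and irreducible $G_1$-modules; and (b) the $G_1$-action on $M(\mu)^{(1)}$ factors through $F_0|_{G_1}$, which is trivial, so $M(\mu)^{(1)}|_{G_1}$ is a sum of trivial modules. Hence $V:=M(\lambda_0)\otimes M(\mu)^{(1)}$ restricted to $G_1$ is isotypic with absolutely simple constituent $S:=M(\lambda_0)|_{G_1}$, which extends to the $G$-module $M(\lambda_0)$. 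I would then invoke the general isotypic-component formalism: for any $G$-module $V$ with $V|_{G_1}$ being $S$-isotypic, the multiplicity space $\operatorname{Hom}_{G_1}(M(\lambda_0),V)$ carries a $G$-action on which $G_1$ acts trivially — hence is $W^{(1)}$ for a $G$-module $W$ — and the evaluation map is a $G$-isomorphism $V\cong M(\lambda_0)\otimes W^{(1)}$, functorially in $G$-submodules of $V$. Applied to $V=M(\lambda_0)\otimes M(\mu)^{(1)}$ this gives $W\cong M(\mu)$; and if $0\neq N\subseteq V$ is a $G$-submodule, then $\operatorname{Hom}_{G_1}(M(\lambda_0),N)$ is a nonzero $G$-submodule of $M(\mu)^{(1)}$, which is irreducible since Frobenius twists of irreducibles are irreducible, forcing $N=V$. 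So $V$ is irreducible. Finally, since $F_0$ multiplies weights by $\ell$, the weights of $V$ are the $\mu'+\ell\nu$ with $\mu'$ a weight of $M(\lambda_0)$ and $\nu$ a weight of $M(\mu)$; the unique $\prec$-maximal one is $\lambda_0+\ell\mu$ (occurring with multiplicity $1$), so the irreducible module $V$ has highest weight $\lambda_0+\ell\mu$, i.e. $V\cong M(\lambda_0+\ell\mu)$.

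The theorem then follows by induction. Writing $\lambda_0+\ell\lambda_1+\cdots+\ell^r\lambda_r=\lambda_0+\ell\mu$ with $\mu$ as above, the lemma gives $M(\lambda_0+\ell\mu)\cong M(\lambda_0)\otimes M(\mu)^{(1)}$. The inductive hypothesis applied to the $\ell$-restricted weights $\lambda_1,\dots,\lambda_r$ yields $M(\mu)\cong M(\lambda_1)\otimes M(\lambda_2)^{(1)}\otimes\cdots\otimes M(\lambda_r)^{(r-1)}$, and twisting by $F_0$, which commutes with tensor products and raises every superscript by one, gives $M(\mu)^{(1)}\cong M(\lambda_1)^{(1)}\otimes\cdots\otimes M(\lambda_r)^{(r)}$; substituting yields the asserted isomorphism.

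The main obstacle is input (a) together with the isotypic-component bookkeeping over the non-reduced group scheme $G_1$: one must make precise that restriction to the first Frobenius kernel both preserves and detects irreducibility of $\ell$-restricted highest weight modules, and that the multiplicity space inherits a $G$-module structure through the identification $G/G_1\cong G$ afforded by $F_0$. Once these structural facts are in hand the argument is essentially formal. As this is precisely Steinberg's theorem, the honest alternative is simply to cite \cite{Ste63}.
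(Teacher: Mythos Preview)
Your proof is correct and is essentially the standard modern argument via the first Frobenius kernel (as in Jantzen's book or Cline--Parshall--Scott), but note that the paper does not prove this theorem at all: it simply states it as Steinberg's Twisted Tensor Product Theorem and cites \cite{Ste63}. So there is nothing to compare your argument against; the ``honest alternative'' you mention at the end --- just citing Steinberg --- is exactly what the paper does.
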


This result, together with Theorem \ref{Ste}, allows us to reduce the study of the $\overline{\F}_\ell$-representations of $G^F$ to the study of the $G(\overline{\F}_\ell)$-modules that can be constructed out of those with $\ell$-restricted highest weights.
More precisely, let $\mathcal{M}$ be the set of $\ell$-\emph{restricted modules} for $G^F$ which consists of the restrictions to $G^F$ of the $M(\lambda)$ for all $\ell$-restricted weights $\lambda$, and $\mathcal{M}_i := \{ M^{(i)}: M \in \mathcal{M} \}$. It is easy to see that $\mathcal{M}_0 = \mathcal{M} = \mathcal{M}_e$ and that $\mathcal{M} \cap \mathcal{M}_i = \{ 1 \}$ for $i=1, \ldots,e-1$. Then, any irreducible $\overline{\F}_\ell$-module for $G^F$ has the form:
\[
M_0 \otimes M_1 \otimes \cdots \otimes M_{e-1},
\]
where $M_i \in \mathcal{M}_i$ for all $i$. Moreover, the $\vert \mathcal{M}\vert^e = q ^m$ possibilities are pairwise non-isomorphic. 

In \cite{Lub16} all irreducible representations of $G$ (of type different from $A_1$), of dimension below some bound, are determined. Then, the $\ell$-restricted modules for the groups $G^F$ and $G$ of dimension up to at least 300 are those listed in Table 2 and Tables 6.6-6.53 of \cite{Lub16} for which the entries in the column $\lambda$ of those tables are all less than $\ell$. On the other hand, when $G$ is of type $A_1$, we have the following description. Let $V_1$ be the natural module of $\SL_2(\F_q)$ and $V_{n+1}$ be the $(n+1)$-symmetric power of $V_1$. When $n \geq 1$, $V_{n+1}$ is a faithful module for $\SL_2(\F_q)$, if $n$ is odd, and a faithful module for $\PSL_2(\F_q)$, if $n$ is even.
It can be proven that, if $\ell>n$, then $V_{n+1}$ is absolutely irreducible and self-dual for $\SL_2(\F_q)$. The modules $V_{n+1}$ for $\ell>n$ are just the $\ell$-restricted modules for $\SL_2(\F_q)$. In the language of highest weights, such modules are of the form $M(a_1 \omega_1)$, with $0\leq a_1 \leq \ell - 1$ and their dimension is $a_1+1$. See Section 5.3 of \cite{BHR13} for more details. 

Finally, we can conclude that the results discussed in this section, together with the tables in \cite{Lub16},  are sufficient to obtain all projective representations of finite simple groups of Lie type (i.e., Chevalley and twisted Chevalley groups) of dimension up to at least 300. 


\section{Proof of the main results} \label{Se:4}

In this section we prove Theorem \ref{princi}, which extends Theorem \ref{rib} to representations of dimension $n = 4\varpi$, where $\varpi$ is a number prime between 17 and 73. 

First, we remark that in view of Proposition \ref{ref1} we are interested in the question of whether the groups $\PGO^\pm_n(\F_{\ell^r})$ contain some almost simple group with socle a finite simple group of Lie type in characteristic $\ell$. 
So, according to part ii) of Definition \ref{chida}, we are just interested in the absolutely irreducible representations $\overline{\rho}: G^F \rightarrow \Omega_n^\pm(\F_{q^\epsilon})$ for P$G := \Soc (Im (\overline{\rho}_\ell^{\proj}))$ a finite simple group of Lie type in characteristic $\ell$, i.e., representations of the form $M(\lambda)$ for certain $q$-restricted weight $\lambda$ (see Theorem \ref{Ste} and the paragraph below this). 

As we remark in the previous section, we have that $\overline{\rho}$ is equivalent to a representation with module $M_0 \otimes \cdots \otimes M_{e-1}$, where $M_i \in \mathcal{M}_i$ for all $i$. In particular, if $n \leq 4\cdot 73<300$, the image of $\overline{\rho}$ is $\GO^\pm_n(\F_{q^\epsilon})$-conjugated to the image of a tensor product of the representations listed in \cite{Lub16} or a tensor product of representations coming from $G$ of type $A_1$. 
In fact, as the image of $\overline{\rho}$ may be conjugate to a subgroup preserving a symmetric form, we are just interested in the self-dual modules $M(\lambda) \in \mathcal{M}$ because $M_0 \otimes \cdots \otimes M_{e-1}$ is self-dual if and only if each $M_i$ is self-dual (see Section 5.1.2 of \cite{BHR13}).

 We remark that the highest weight of the irreducible module $M(\lambda)^*$ (the dual of $M(\lambda)$) is $-\omega_0 \lambda$, where $\omega_0$ is the longest word in the Weyl group of $G$. This remark implies that $M(\lambda)$ is self-dual if and only if $\lambda = -\omega_0 \lambda$ that is, $\lambda$ is invariant under the graph automorphism of the Dynkin diagram associated to $G$. In particular; since $-\omega_0 = \mbox{Id}$ for the groups of type $A_1$, $B_n$, $C_n$, $D_n$($n$ even), $E_7$, $E_8$, $F_4$ and $G_2$; all irreducible modules for these groups are self-dual.

On the other hand, recall that the Frobenius-Schur indicator allows us to decide when a self-dual representation is orthogonal or symplectic. If this indicator is 1, the representation is orthogonal and if it is $-1$, the representation is symplectic (see Lemma 78 and Lemma 79 of \cite{Ste16} and Section 6.3 of \cite{Lub16}).  

Finally, we remark that in our case, if $M_i$ and $M_j$ are two self-dual modules both symplectic or both orthogonal, the tensor product $M_i \otimes M_j$ is orthogonal; and if one of $M_i$ and $M_j$ is symplectic and the other is orthogonal, then $M_i \otimes M_j$ is symplectic (see Section 1.9 of \cite{BHR13}).

\begin{proof}[Proof of Theorem \ref{princi}]
According to the previous discussion, it is enough to prove that the socle of the image of $\overline{\rho}^{\proj}_\ell$ is a finite simple group P$G$ of type $D_{2\varpi}$ or ${^2D_{2\varpi}}$. In other words, we need to prove that the only irreducible $4\varpi$-dimensional representations $\overline{\rho}: G^F \rightarrow \Omega_{4\varpi}^\pm(\F_{q^\epsilon})$, occur when $G$ is of type $D_{2\varpi}$ (see also $\S$ 8.2 of \cite{Ze18}).

First, we assume that $G$ is of a Lie type different from $A_1$. As in Section 6.4 of \cite{Lub16}, in order to determine all irreducible representations in defining characteristic for groups of a fixed Lie type, we first need to compute all factorizations of $n$ into factors greater than 1. Such factorizations are $4\varpi = 4\cdot \varpi = 2 \cdot 2 \cdot \varpi = 2 \cdot 2 \varpi$.
According to Tables 6.6-6.53 of \cite{Lub16} there are no irreducible representations of dimension 2 and the only irreducible representations of dimension 4 occur when $G$ is of type $A_3$ or type $B_2$. However, the representations of groups of type $A_3$ are of the form $M(\omega_1)$ and $M(\omega_3)$, so these are no self-dual, and the only representations of $B_2$ of dimension $\varpi$ occur when $\ell = 7$ (resp. $\ell=11$) and $\varpi = 71$ (resp.  $\varpi = 61$).  Thus, if we assume that $\ell > 4\varpi$, there are no tensor decomposable representations of dimension $4\varpi$. Moreover, according to Table 2 and Tables 6.6-6.53 of \cite{Lub16}, if $\ell > 19$ the only irreducible tensor indecomposable self-dual representations of dimension $4\varpi$ are the natural representations of $C_{2\varpi}$ and $D_{2\varpi}$, but the natural representation of $C_{2\varpi}$ is symplectic, hence when the Lie type of $G$ is different from $A_1$ and $\ell>4\varpi$ the only irreducible $4\varpi$-dimensional orthogonal representation occurs when $G$ is of type $D_{2\varpi}$.

Finally, following the discussion of Section 5.3 of \cite{BHR13}, when $G$ is of type $A_1$, the tensor decomposable representations should be of the form $V_{n/m} \otimes V_{n/m}^\sigma \otimes \cdots \otimes V_{n/m}^{\sigma^\alpha}$ were $\sigma$ is the field automorphism $x \rightarrow x^q$. As the twist of a representation by a field automorphism does not change the dimension, then $\alpha$ is such that $(n/m)^\alpha = 4\varpi$.
So, there are no tensor decomposable representations of dimension $4 \varpi$ in this case. Moreover, as the $\ell$-restricted modules $V_{n-1}$ are of the form $M((4 \varpi - 1) \omega_1)$, when $\ell > 4 \varpi$, we have that its Frobenius-Schur indicator is $-1$. Then, the tensor indecomposable representations are all symplectic. Hence, there are no self-dual orthogonal representations of dimension $4 \varpi$ when $G$ is of type $A_1$ and $\ell > 4\varpi$, which concludes our proof. 
\end{proof}

\begin{proof}[Proof of Corollary \ref{corpri}] 
Given a couple of primes $(p,t)$, chosen as in Section \ref{Se:1}, we have from Corollary 7.4  of \cite{Ze18} that there exists a compatible system $\mathcal{R} = \{ \rho_\ell \}_\ell$ of orthogonal Galois representations $\rho_\ell: G_\Q \rightarrow \GO_n(\overline{\Q}_\ell)$ as in Theorem \ref{rt}.  Then, by Theorem \ref{princi} we have that, if $\ell$ is greater than $4\varpi$ and different from $p$ and $t$, the image of $\overline{\rho}_\ell^{\proj}$ is one of the following groups: $\POM^\pm_n(\F_{\ell^s})$, $\PSO^\pm_n(\F_{\ell^s})$, $\PO_n^\pm(\F_{\ell^s})$ or $\PGO^\pm_n(\F_{\ell^s})$ for some $s>0$.

Finally, by Chebotarev's Density Theorem, there are infinite ways to choose the couple of primes $(p,t)$. Then, Corollary 7.4 of \emph{loc. cit.}, implies that there are infinite compatible systems $\mathcal{R}_k = \{ \rho_{\ell,k} \}_{\ell,k}$ ($k \in \N$)  such that, for a fixed prime $\ell>n$, the size of the image of $\overline{\rho}_{\ell,k}^{\proj}$ is unbounded for running $k$, because we can choose $p$ as large as we please so that elements of larger and larger orders appear in the inertia images.
\end{proof}

\subsection*{Acknowledgments}
I want to give special thanks to the anonymous referee whose  insightful comments and suggestions were fundamental to improve the presentation and readability of this paper.


\end{document}